\documentclass[11pt]{article}

\usepackage[a4paper,margin=1.12in]{geometry}
\usepackage{amsmath,amssymb,amsthm,mathtools}
\usepackage{enumitem}
\usepackage{hyperref}
\usepackage{microtype}

\newcommand{\PP}{\mathbb P}
\newcommand{\CC}{\mathbb C}
\newcommand{\cO}{\mathcal O}
\newcommand{\Span}{\langle}
\newcommand{\Rspan}{\rangle}
\newcommand{\CR}{\mathrm{CR}}

\newcommand{\id}{\operatorname{id}}

\theoremstyle{plain}
\newtheorem{theorem}{Theorem}
\newtheorem{proposition}[theorem]{Proposition}

\newtheorem{corollary}[theorem]{Corollary}

\theoremstyle{definition}
\newtheorem{definition}[theorem]{Definition}

\theoremstyle{remark}
\newtheorem{remark}[theorem]{Remark}
\newtheorem{warning}[theorem]{Warning}

\title{A Pascal-type construction of the Segre cubic and the Cremona--Richmond configuration}
\author{Piotr Pokora and Tomasz Szemberg}
\date{}

\begin{document}

\maketitle

\begin{abstract}
We present a Pascal-type residual construction in \(\mathbb P^4\). Starting from two quadruples of hyperplanes whose four diagonal intersection planes lie in a hyperplane, we show that the twelve residual planes lie on a cubic threefold. In the general case this cubic is the Segre cubic, and the construction recovers its fifteen planes and the associated Cremona--Richmond configuration. We also exhibit a point-line realization of this configuration in \(\mathbb P^4\) and show that it gives a \((5,3)\)-geprofi set.

\medskip 

\noindent\textbf{2020 Mathematics Subject Classification.}
Primary 14N20; Secondary 14N05, 14N25, 14J70, 51A20.

\medskip

\noindent\textbf{Keywords.}
Pascal-type theorem, residual intersection, Segre cubic, nodal cubic threefold, Cremona--Richmond configuration, duads and synthemes, geprofi set, projective configurations.
\end{abstract}

\section{Introduction and residual Pascal-type motivation}

Classical incidence theorems often have interesting residual interpretations.  Pascal's
theorem and the Braikenridge--Maclaurin theorem are basic examples: a special
incidence assumption forces a residual set of points to lie on a curve of
unexpectedly low degree.  Such phenomena have higher-dimensional analogues,
where points are replaced by linear subspaces and plane curves by hypersurfaces.

In this note we consider such a residual construction in the complex projective $4$-space.  Our
main case is the $k=4$ situation in \(\PP^4\), where two quadruples of
hyperplanes give sixteen intersection planes.  If four distinguished planes are
contained in a hyperplane, then the remaining twelve planes lie on a cubic
threefold.  We show that, in the general case, this cubic is the Segre cubic.
This connects the residual construction with the fifteen planes on the Segre
cubic, the Cremona--Richmond configuration, and a natural \((5,3)\)-geprofi set.

The residual point of view behind Pascal and Braikenridge--Maclaurin type
theorems is simple.  If two \(k\)-tuples of hypersurfaces of the same degree cut
out a complete intersection, and if a part of the complete intersection lies on
a hypersurface of small degree, then the residual part lies on a hypersurface of
complementary degree.  In the plane situation this recovers the usual
Braikenridge--Maclaurin statement; see \cite{Traves2013}.  In \(\PP^3\), one
starts with two \(k\)-tuples of planes
\[
F_1,\ldots,F_k,\qquad G_1,\ldots,G_k
\]
and the \(k^2\) lines
\[
\ell_{ij}=F_i\cap G_j.
\]
If \(k\) of these lines are coplanar, then the residual \(k^2-k\) lines lie on a surface
of degree \(k-1\).  For \(k=3\), this surface is a quadric, and one obtains a spatial
Pascal-type statement involving the two rulings of a smooth quadric surface; see
Le Van~\cite{LeVan}.

The case studied here is the next natural step: the \(k=4\) construction in
\(\PP^4\).  Its residual hypersurface is a cubic threefold, and the main point
is that this cubic is the Segre cubic.

We first record the elementary residual construction in \(\PP^4\) in a general form,
since the case \(k=4\) used below will be obtained from it by specialization.
So let
\[
F_1,\ldots,F_k,\qquad G_1,\ldots,G_k
\]
be two \(k\)-tuples of hyperplanes in \(\PP^4\), with equations
\[
F_i=(f_i=0),\qquad G_j=(g_j=0).
\]
For \(1\leq i,j\leq k\), put
\[
\Pi_{ij}=F_i\cap G_j.
\]
For a general configuration, each \(\Pi_{ij}\) is a plane.

\begin{proposition}[Explicit residual hypersurface]
\label{prop:residual-hypersurface}
Let \(\sigma\in S_k\) be a permutation.  Suppose that the \(k\) planes
\[
\Pi_{i,\sigma(i)},\qquad i=1,\ldots,k,
\]
are contained in a hyperplane
\[
H=(h=0).
\]
Assume moreover that \(H\neq F_i\) and \(H\neq G_{\sigma(i)}\) for every \(i\).  Then,
after rescaling the linear forms \(f_i\) and \(g_{\sigma(i)}\), one may write
\[
h=f_i+g_{\sigma(i)}
\qquad\text{for all }i=1,\ldots,k.
\]
With this normalization, the form
\[
\prod_{i=1}^k f_i-(-1)^k\prod_{j=1}^k g_j
\]
is divisible by \(h\).  Hence
\[
X_\sigma:\quad
\frac{
\prod_{i=1}^k f_i-(-1)^k\prod_{j=1}^k g_j
}{h}=0
\]
is a hypersurface of degree \(k-1\) in \(\PP^4\), and it contains all residual planes
\[
\Pi_{ij}=F_i\cap G_j,\qquad j\neq \sigma(i).
\]
\end{proposition}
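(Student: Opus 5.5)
The plan is to first establish the normalization, then the divisibility, and finally the containment of the residual planes by restricting to each one.

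\emph{Normalization.} Fix \(i\). Since \(\Pi_{i,\sigma(i)}=F_i\cap G_{\sigma(i)}\) is a plane, \(f_i\) and \(g_{\sigma(i)}\) are linearly independent. The linear forms vanishing on this plane form the two-dimensional space \(\Span f_i,g_{\sigma(i)}\Rspan\). Because \(H\supseteq \Pi_{i,\sigma(i)}\), we get \(h=a_if_i+b_ig_{\sigma(i)}\). Here \(b_i\neq 0\), since \(H\neq F_i\), and \(a_i\neq 0\), since \(H\neq G_{\sigma(i)}\). Replacing \(f_i\) by \(a_if_i\) and \(g_{\sigma(i)}\) by \(b_ig_{\sigma(i)}\) gives \(h=f_i+g_{\sigma(i)}\). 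As \(\sigma\) is a bijection, each \(f_i\) and each \(g_j\) is rescaled exactly once, so these rescalings are compatible for all \(i\) simultaneously.

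\emph{Divisibility.} After the normalization, \(g_{\sigma(i)}\equiv -f_i \pmod h\). Reindexing the product by \(\sigma\),
\[
\prod_{j=1}^k g_j=\prod_{i=1}^k g_{\sigma(i)}\equiv (-1)^k\prod_{i=1}^k f_i \pmod h .
\]
Hence \(P:=\prod f_i-(-1)^k\prod g_j\) lies in the ideal \((h)\), and \(Q:=P/h\) is a form of degree \(k-1\).

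I need \(Q\not\equiv 0\) for \(X_\sigma\) to be a genuine hypersurface. If \(P=0\), then unique factorization forces \(\{F_i\}=\{G_j\}\) as multisets of hyperplanes, and then some \(\Pi_{ij}\) would fail to be a plane. This contradicts the standing assumption that the \(\Pi_{ij}\) are planes. I expect this nondegeneracy to be the only delicate point, and it needs exactly this appeal to the genericity hypothesis.

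\emph{Containment.} Take \(j\neq\sigma(i)\), and write \(j=\sigma(m)\) with \(m\neq i\). Then \(f_i\) and \(g_j\) vanish on \(\Pi_{ij}\), so \(P\) vanishes on \(\Pi_{ij}\). I need \(h\) not identically zero on \(\Pi_{ij}\), i.e. \(h\notin\Span f_i,g_j\Rspan\).

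Suppose instead that \(\Pi_{ij}\subset H\). Restrict to \(H\cong\PP^3\), where \(g_{\sigma(l)}=-f_l\) for every \(l\). Then \(\Pi_{ij}=F_i\cap G_j\subset H\) gives
\[
\Pi_{ij}=F_i\cap G_j\cap H=F_i\cap F_m\cap H=F_i\cap H.
\]
Since \(\Pi_{ij}\) is a plane, \(F_m\cap H=F_i\cap H\). Hence \(f_m\in\Span f_i,h\Rspan\), so \(F_m\) contains \(\Pi_{i,\sigma(i)}\), which I will treat as excluded by the general position assumption.

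In the generic case this cannot happen, so \(h|_{\Pi_{ij}}\) is a nonzero linear form. Since \(P|_{\Pi_{ij}}=h|_{\Pi_{ij}}\cdot Q|_{\Pi_{ij}}=0\) and \(\Pi_{ij}\) is irreducible, \(Q|_{\Pi_{ij}}=0\). Thus \(\Pi_{ij}\subset X_\sigma\), as claimed.

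Even in the degenerate case \(\Pi_{ij}\subset H\), one could alternatively argue by continuity, since containment is a closed condition. I would state the general-position assumption explicitly rather than rely on that.
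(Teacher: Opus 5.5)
Your proposal is correct and follows the paper's proof step for step. You place $h$ in the pencil spanned by $f_i$ and $g_{\sigma(i)}$ and rescale, reduce $\prod_j g_j$ modulo $h$ by reindexing through $\sigma$, and cancel $h$ on each residual plane using $h|_{\Pi_{ij}}\not\equiv 0$. Your extra checks (nonzero rescaling constants, $Q\not\equiv 0$ by unique factorization, and that $\Pi_{ij}\subset H$ would force $\Pi_{ij}=F_i\cap H=\Pi_{i,\sigma(i)}$) just make explicit what the paper compresses into ``for a non-degenerate configuration''.
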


\begin{proof}
Since
\[
\Pi_{i,\sigma(i)}=F_i\cap G_{\sigma(i)}\subset H,
\]
the linear form \(h\) belongs to the pencil generated by \(f_i\) and
\(g_{\sigma(i)}\).  Thus, after rescaling, we may assume that
\[
h=f_i+g_{\sigma(i)}.
\]
Modulo \(h\), we have
\[
g_{\sigma(i)}\equiv -f_i.
\]
Since \(\sigma\) is a permutation,
\[
\prod_{j=1}^k g_j
=
\prod_{i=1}^k g_{\sigma(i)}
\equiv
(-1)^k\prod_{i=1}^k f_i
\pmod h.
\]
This proves the divisibility by \(h\).

Let us take now \(j\neq \sigma(i)\).  On the plane
\[
\Pi_{ij}=F_i\cap G_j
\]
one has \(f_i=0\) and \(g_j=0\), and hence both products
\[
\prod_{r=1}^k f_r,\qquad \prod_{s=1}^k g_s
\]
vanish on \(\Pi_{ij}\).  Therefore the numerator
\[
\prod_{r=1}^k f_r-(-1)^k\prod_{s=1}^k g_s
\]
vanishes on \(\Pi_{ij}\).  Since \(\Pi_{ij}\) is not one of the diagonal planes contained
in \(H\), the form \(h\) does not vanish identically on \(\Pi_{ij}\) for a non-degenerate
configuration.  Hence the quotient defining \(X_\sigma\) vanishes on a dense open subset
of \(\Pi_{ij}\), and therefore on the whole plane.
\end{proof}

\section{The case \(k=4\): the residual cubic}

We take
\[
k=4,\qquad \sigma=\id.
\]
Thus we assume that the four diagonal planes
\[
\Pi_{11},\Pi_{22},\Pi_{33},\Pi_{44}
\]
are contained in a hyperplane
\[
H=(h=0).
\]
After rescaling,
\[
h=f_i+g_i,\qquad i=1,\ldots,4,
\]
and hence
\[
g_i=h-f_i.
\]

\begin{theorem}[The \(k=4\) residual cubic in \(\PP^4\)]
\label{thm:k4-cubic}
The twelve residual planes
\[
\Pi_{ij}=F_i\cap G_j,\qquad i\neq j,
\]
are contained in the cubic threefold
\[
X:\quad
\frac{f_1f_2f_3f_4-g_1g_2g_3g_4}{h}=0.
\]
Equivalently, writing \(e_r=e_r(f_1,f_2,f_3,f_4)\) for the $r$-th elementary symmetric polynomial, the equation of \(X\) is
\[
e_3-h e_2+h^2 e_1-h^3=0.
\]
More explicitly, we have
\[
\begin{aligned}
X:\quad&
f_1f_2f_3+f_1f_2f_4+f_1f_3f_4+f_2f_3f_4\\
&-h(f_1f_2+f_1f_3+f_1f_4+f_2f_3+f_2f_4+f_3f_4)\\
&+h^2(f_1+f_2+f_3+f_4)-h^3=0.
\end{aligned}
\]
\end{theorem}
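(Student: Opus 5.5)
The first assertion is the specialization of Proposition~\ref{prop:residual-hypersurface} to \(k=4\) and \(\sigma=\id\). Since \((-1)^4=1\), the numerator is \(f_1f_2f_3f_4-g_1g_2g_3g_4\). The proposition gives divisibility by \(h\) and shows that the quotient vanishes on every \(\Pi_{ij}\) with \(j\neq i\), which are exactly the twelve residual planes. The quotient has degree \(4-1=3\), so \(X\) is a cubic threefold, provided the quotient is not identically zero. That will follow from the explicit formula below, since its \(e_3\) term is nonzero for independent \(f_i\).

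For the explicit equation I substitute \(g_i=h-f_i\) and expand
\[
\prod_{i=1}^4 (h-f_i)=h^4-h^3e_1+h^2e_2-he_3+e_4,
\]
with \(e_r=e_r(f_1,\dots,f_4)\). Since \(f_1f_2f_3f_4=e_4\), the numerator becomes
\[
e_4-\bigl(h^4-h^3e_1+h^2e_2-he_3+e_4\bigr)=-h^4+h^3e_1-h^2e_2+he_3
=h\bigl(e_3-he_2+h^2e_1-h^3\bigr).
\]
Dividing by \(h\) gives the stated form \(e_3-he_2+h^2e_1-h^3=0\). This computation also gives a direct proof of the divisibility, independent of the proposition. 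Writing out \(e_1,e_2,e_3\) term by term yields the fully explicit display.

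There is essentially no obstacle here. The only delicate point is the containment of the residual planes, which is inherited from the proposition. To make it self-contained, one can check it directly from the factored form. On \(\Pi_{ij}\) with \(i\neq j\), both \(f_i=0\) and \(g_j=0\) hold, so the numerator \(h\cdot(\text{cubic})\) vanishes there. Moreover \(h\) does not vanish identically on \(\Pi_{ij}\) in a non-degenerate configuration: otherwise \(h\) would lie in \(\langle f_i,g_j\rangle=\langle f_i,f_j,h\rangle\), forcing \(f_i\) and \(f_j\) to be dependent modulo \(h\). So the cubic vanishes on a dense open subset of \(\Pi_{ij}\), hence on all of \(\Pi_{ij}\).
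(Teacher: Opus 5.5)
Your proposal is correct and is essentially the paper's proof: specialize Proposition~\ref{prop:residual-hypersurface} to $k=4$, $\sigma=\id$, substitute $g_i=h-f_i$, expand $\prod(h-f_i)$, and divide by $h$. One slip in your optional extra check: $\langle f_i,g_j\rangle=\langle f_i,h-f_j\rangle$ is a two-dimensional subspace of $\langle f_i,f_j,h\rangle$, not equal to it, and as written the condition would be vacuous. Your conclusion is still right, since $h\in\langle f_i,h-f_j\rangle$ does force $f_i,f_j$ to be dependent modulo $h$.
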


\begin{proof}
This is Proposition~\ref{prop:residual-hypersurface} for \(k=4\).  Since
\[
g_i=h-f_i,
\]
we have
\[
\begin{aligned}
g_1g_2g_3g_4
&=(h-f_1)(h-f_2)(h-f_3)(h-f_4)=h^4-h^3e_1+h^2e_2-he_3+e_4.
\end{aligned}
\]
Therefore
\[
f_1f_2f_3f_4-g_1g_2g_3g_4
=
-h^4+h^3e_1-h^2e_2+he_3.
\]
Dividing by \(h\) gives us
\[
e_3-h e_2+h^2e_1-h^3.
\]
\end{proof}

\subsection{Three additional planes}

The twelve residual planes are accompanied by three more natural planes.  They are indexed
by the three partitions of \(\{1,2,3,4\}\) into two pairs, namely
\[
M_{12|34}:\quad f_1+f_2=h,\qquad f_3+f_4=h,
\]
\[
M_{13|24}:\quad f_1+f_3=h,\qquad f_2+f_4=h,
\]
and
\[
M_{14|23}:\quad f_1+f_4=h,\qquad f_2+f_3=h.
\]

\begin{proposition}
\label{prop:extra-planes}
The planes
\[
M_{12|34},\qquad M_{13|24},\qquad M_{14|23}
\]
are contained in \(X\).  Thus the construction produces fifteen distinguished planes:
\[
\Pi_{ij}\quad (i\neq j),
\qquad
M_{12|34},\ M_{13|24},\ M_{14|23}.
\]
\end{proposition}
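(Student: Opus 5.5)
By the symmetry of the equation of \(X\) in Theorem~\ref{thm:k4-cubic}, it suffices to treat the single plane \(M_{12|34}\). The cubic form
\[
\Phi=e_3-h e_2+h^2 e_1-h^3
\]
is symmetric in \(f_1,\dots,f_4\), and \(h\) plays no role in the permutation. Any permutation of the indices carrying the partition \(12|34\) to \(13|24\) or to \(14|23\) therefore preserves \(\Phi\) and maps \(M_{12|34}\) to the other two planes. (Equivalently, one can simply repeat the argument below with the roles of the indices exchanged.)

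To show \(\Phi\) vanishes on \(M_{12|34}\), I will work modulo the ideal \((f_1+f_2-h,\ f_3+f_4-h)\). The cleanest route is the factorization behind Theorem~\ref{thm:k4-cubic}:
\[
h\,\Phi=f_1f_2f_3f_4-\prod_{i=1}^4 (h-f_i).
\]
On \(M_{12|34}\) we have \(h-f_1=f_2\), \(h-f_2=f_1\), \(h-f_3=f_4\) and \(h-f_4=f_3\). Hence
\[
\prod_{i=1}^4 (h-f_i)=f_2f_1f_4f_3,
\]
so \(h\Phi\) vanishes identically on \(M_{12|34}\).

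It remains to pass from \(h\Phi\) to \(\Phi\). The plane \(M_{12|34}\) is irreducible. In a non-degenerate configuration \(h\) does not vanish identically on it: if \(h=0\) on \(M_{12|34}\), then \(h\) would lie in the span of \(f_1+f_2-h\) and \(f_3+f_4-h\), which is a special linear relation among \(f_1,\dots,f_4,h\). So \(\Phi\) vanishes on the dense open subset \(\{h\neq 0\}\) of \(M_{12|34}\), and hence on the whole plane. This is the same argument as at the end of the proof of Proposition~\ref{prop:residual-hypersurface}.

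As a cross-check that avoids division altogether, I will also substitute \(s=f_1+f_2=h\), \(t=f_3+f_4=h\), \(p=f_1f_2\), \(q=f_3f_4\) directly. Then
\[
e_1=2h,\qquad e_2=p+q+h^2,\qquad e_3=p\,t+q\,s=h(p+q),
\]
so
\[
\Phi=h(p+q)-h(p+q+h^2)+2h^3-h^3=0
\]
identically. This also confirms that no non-degeneracy hypothesis is really needed. The main (mild) obstacle is only this bookkeeping; the final sentence of the proposition, listing the fifteen planes, then follows by combining this with Theorem~\ref{thm:k4-cubic}.
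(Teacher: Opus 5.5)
Your proof is correct and takes the same route as the paper: it substitutes $g_1=f_2$, $g_2=f_1$, $g_3=f_4$, $g_4=f_3$ on $M_{12|34}$ to kill the numerator $f_1f_2f_3f_4-g_1g_2g_3g_4$, and treats the other two planes by symmetry. Your extra steps fill in a point the paper leaves implicit, namely passing from the vanishing of $h\Phi$ to the vanishing of $\Phi$; your direct check that $\Phi$ lies in the ideal $(f_1+f_2-h,\ f_3+f_4-h)$ also shows that no non-degeneracy assumption is needed.
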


\begin{proof}
On \(M_{12|34}\) one has
\[
f_1+f_2=h,\qquad f_3+f_4=h.
\]
Thus
\[
g_1=h-f_1=f_2,\qquad g_2=h-f_2=f_1,
\]
and similarly
\[
g_3=f_4,\qquad g_4=f_3.
\]
Therefore
\[
g_1g_2g_3g_4=f_1f_2f_3f_4.
\]
So the defining numerator of \(X\) vanishes identically on \(M_{12|34}\).  The other two
cases are identical.
\end{proof}

\subsection{Identification with the Segre cubic}

Assume now that
\[
h,f_1,f_2,f_3,f_4
\]
are linearly independent.  Then they form a coordinate system on \(\PP^4\).

\begin{proposition}
\label{prop:ten-nodes}
If $h,f_1,f_2,f_3,f_4$ are linearly independent, then the cubic \(X\) has ten ordinary double points.  They are
the six points
\[
p_I:\quad f_i=h\ \text{for } i\in I,\qquad f_j=0\ \text{for }j\notin I,
\qquad |I|=2,
\]
and the four points
\[
q_i:\quad h=0,\qquad f_j=0\ \text{for } j\neq i,
\qquad i=1,\ldots,4.
\]
\end{proposition}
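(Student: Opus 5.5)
Since \(h,f_1,\dots,f_4\) are linearly independent, I take them as homogeneous coordinates \((h:x_1:x_2:x_3:x_4)\) with \(x_i=f_i\). Then Theorem~\ref{thm:k4-cubic} gives
\[
\Phi=e_3(x)-h\,e_2(x)+h^2e_1(x)-h^3.
\]
It is convenient to use the product form \(h\Phi=\prod x_i-\prod(h-x_i)\) and the symmetry of \(\Phi\) under \(S_4\) permuting the \(x_i\). There is a further hidden symmetry. The substitution \(x_i\mapsto h-x_i\) turns \(h\Phi\) into \(-h\Phi\), so \(\Phi\mapsto-\Phi\), and this accounts for a larger group acting. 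The plan has three parts: check that the ten listed points are singular, check that they are ordinary nodes, and show there are no other singular points.

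\emph{Singularity of the listed points.} I compute \(\partial_{x_k}\Phi=e_2(\hat x_k)-h\,e_1(\hat x_k)+h^2\), where \(\hat x_k\) means the variables other than \(x_k\). I also compute \(\partial_h\Phi=-e_2+2he_1-3h^2\). At \(q_i\) we have \(h=0\) and only \(x_i\neq0\), so every \(e_2\)-type term vanishes and both derivatives are \(0\). At \(p_{12}\) we have \(h=1\), \(x_1=x_2=1\), \(x_3=x_4=0\). Then
\[
\partial_{x_3}\Phi=e_2(1,1,0)-e_1(1,1,0)+1=1-2+1=0,
\]
and \(\partial_{x_1}\Phi=0-1+1=0\). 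Similarly \(\partial_h\Phi=-1+4-3=0\). The remaining \(p_I\) follow by \(S_4\)-symmetry. All ten points also satisfy \(\Phi=0\), which follows from Euler's formula or a direct check.

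\emph{Ordinary double points.} I will compute the Hessian, i.e.\ the quadratic part of \(\Phi\) in local affine coordinates. At \(q_1\) I set \(x_1=1\) and use \(h,x_2,x_3,x_4\) as local coordinates. The quadratic part is
\[
(x_2x_3+x_2x_4+x_3x_4)-h(x_2+x_3+x_4)+h^2.
\]
This is a quadratic form in four variables, and I will check that its symmetric matrix, with \(1\) on the \(h^2\) diagonal, \(\tfrac12\) for the \(x_ix_j\) terms and \(-\tfrac12\) for the \(hx_i\) terms, has nonzero determinant. The involution \(x_i\mapsto h-x_i\) sends \(q_i\), where \(h=0\), to points with \(x_j=-x_j\) and \(h=0\), so it does not directly move the \(q_i\) to the \(p_I\). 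Because of this I will treat \(p_{12}\) by its own local computation. I set \(h=1\), \(x_1=1+u_1\), \(x_2=1+u_2\), \(x_3=u_3\), \(x_4=u_4\), and extract the quadratic part, which should come out as something like \(u_1u_2+u_3u_4+\dots\) with the cross terms. Then I check nondegeneracy. This is routine.

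\emph{No further singularities.} This is the main obstacle. First I treat \(h=0\). There \(\Phi=e_3\), and the gradient conditions become \(e_2(\hat x_k)=0\) for all \(k\) together with \(e_2(x)=0\). Solving these forces at most one coordinate to be nonzero, which gives exactly the \(q_i\). Next I treat \(h=1\). Here I use \(\partial_{x_k}\Phi\) for each \(k\): I expand \(e_j(x)=x_k e_{j-1}(\hat x_k)+e_j(\hat x_k)\), which lets me write \(\partial_{x_k}\Phi=\prod_{j\ne k}(1-x_j)\cdot(\ldots)\). Alternatively I differentiate \(h\Phi=\prod x_i-\prod(h-x_i)\) directly. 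At a singular point with \(h\neq0\) this gives
\[
\prod_{j\neq k}x_j+\prod_{j\neq k}(1-x_j)=0\quad\text{for all }k.
\]
Subtracting the equations for \(k\) and \(l\) shows that \((x_k-x_l)\bigl[\prod_{j\ne k,l}x_j-\prod_{j\ne k,l}(1-x_j)\bigr]=0\). A case analysis on which coordinates coincide should then force the multiset \(\{x_i\}=\{0,0,1,1\}\). If the computation gets messy, I fall back on a classical count: a cubic threefold has at most \(10\) isolated nodes, and a cubic with \(10\) nodes is projectively the Segre cubic. Together with nondegeneracy at the ten points found, this excludes further singularities.
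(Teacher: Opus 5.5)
\textbf{There is a gap in the step you flag as the main obstacle}, namely excluding further singular points with $h\neq0$; the rest of your plan is sound. Your check that the ten points are singular, your local computations, and your treatment of $h=0$ are the paper's. Using your product form, the quadratic part at $p_{\{1,2\}}$ is exactly $u_3u_4-u_1u_2$, with no cross terms, and the Gram matrix you describe at $q_1$ has determinant $1/16$.

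The gap is this. The four equations $\prod_{j\neq k}x_j+\prod_{j\neq k}(1-x_j)=0$ cannot, by any case analysis, force $\{x_i\}=\{0,0,1,1\}$. Take $x_1=\dots=x_4=t$ with $t^3+(1-t)^3=3t^2-3t+1=0$. This satisfies all four equations, and every difference $(x_k-x_l)[\dots]$ vanishes trivially there. Yet at this point $\Phi=t^4-(1-t)^4=(2t-1)(2t^2-2t+1)=(2t-1)/3\neq0$. So these are critical points of $\Phi(1,x)$ that do not lie on $X$.

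What is missing is the remaining equation $\Phi=0$. By Euler's formula this is equivalent to $\partial_h\Phi=0$, which the paper imposes as $q=2s-3$. In your setup it finishes the argument in one line. Multiply the $k$-th equation by $x_k$ and substitute $\prod_i x_i=\prod_i(1-x_i)=(1-x_k)\prod_{j\neq k}(1-x_j)$. This gives $\prod_{j\neq k}(1-x_j)=0$, and then the $k$-th equation gives $\prod_{j\neq k}x_j=0$, for every $k$. So every three of the four coordinates contain a $0$ and a $1$, which forces exactly two $0$'s and two $1$'s. This is arguably shorter than the paper's case analysis via $f_i^2+(1-s)f_i+q-s+1=0$.

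Your fallback does not close the gap as stated. A bound on the number of nodes says nothing about further singular points that are not nodes, nor about a positive-dimensional singular locus. Also, identifying $X$ with the Segre cubic is what the paper derives \emph{from} this proposition in Theorem~\ref{thm:segre-cubic}. It does so via uniqueness for cubics whose singularities are ten nodes, so invoking that here presupposes there are no other singular points.

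You could try to rescue the fallback in two steps. First show that $\mathrm{Sing}(X)$ is finite: a positive-dimensional component would meet $\{h=0\}$, where your own computation leaves only the isolated nodes $q_i$. Then you would still need a bound on isolated singular points of arbitrary type, not only nodes. That is much heavier input than the algebraic fix above.
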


\begin{proof}
Let
\[
\Phi=e_3-he_2+h^2e_1-h^3.
\]
The partial derivatives are
\[
\frac{\partial \Phi}{\partial f_i}
=
e_2(f_1,\ldots,\widehat f_i,\ldots,f_4)
-h\,e_1(f_1,\ldots,\widehat f_i,\ldots,f_4)+h^2
\]
and
\[
\frac{\partial \Phi}{\partial h}
=
-e_2+2he_1-3h^2 .
\]
We first determine the singular locus. If \(h\neq 0\), we normalize \(h=1\). Put
\[
s=f_1+\cdots+f_4,\qquad q=e_2(f_1,\ldots,f_4).
\]
The equations \(\partial \Phi/\partial f_i=0\) may be written as
\[
f_i^2+(1-s)f_i+q-s+1=0,\qquad i=1,\ldots,4.
\]
Hence the four numbers \(f_1,\ldots,f_4\) take at most two distinct values. Together with
\[
\frac{\partial \Phi}{\partial h}=0,
\qquad\text{i.e.}\qquad q=2s-3,
\]
this gives, by a short case analysis on the multiplicities of the two values, precisely the
six solutions for which two of the \(f_i\)'s are equal to \(1\) and the remaining two are
equal to \(0\). These are the points \(p_I\) with \(|I|=2\).

If \(h=0\), then \(\partial\Phi/\partial h=0\) gives \(e_2=0\). Moreover
\[
\frac{\partial\Phi}{\partial f_i}=e_2(f_1,\ldots,\widehat f_i,\ldots,f_4)=0.
\]
Since
\[
e_2(f_1,\ldots,\widehat f_i,\ldots,f_4)
=e_2-f_i\sum_{j\neq i}f_j,
\]
we get
\[
f_i(s-f_i)=0
\]
for every \(i\), where \(s=f_1+\cdots+f_4\). Thus every \(f_i\) is either \(0\) or \(s\).
Since not all \(f_i\)'s vanish, exactly one of them is non-zero. This gives the four points
\(q_i\).

It remains to check the analytic type. By symmetry, it is enough to consider \(p_{\{1,2\}}\)
and \(q_1\). Near \(p_{\{1,2\}}\), in the affine chart \(h=1\), we can write
\[
f_1=1+u_1,\quad f_2=1+u_2,\quad f_3=u_3,\quad f_4=u_4.
\]
Then the quadratic part of \(\Phi\) is
\[
-u_1u_2+u_3u_4,
\]
which is non-degenerate. Near \(q_1\), in the affine chart \(f_1=1\), write
\[
h=v_0,\quad f_2=v_2,\quad f_3=v_3,\quad f_4=v_4.
\]
The quadratic part is
\[
v_0^2-v_0(v_2+v_3+v_4)+v_2v_3+v_2v_4+v_3v_4.
\]
Its Hessian matrix has non-zero determinant, hence it is non-degenerate. Therefore all ten
singularities are ordinary double points.
\end{proof}
\begin{theorem}
\label{thm:segre-cubic}
For a general residual configuration described as above, the cubic threefold \(X\subset\PP^4\) is
projectively equivalent to the Segre cubic primal
\[
\mathfrak S_3=
\left\{
x_0+\cdots+x_5=0,\quad x_0^3+\cdots+x_5^3=0
\right\}
\subset \PP^5.
\]
Consequently, the fifteen planes from Proposition~\ref{prop:extra-planes} are precisely
the fifteen planes on the Segre cubic.
\end{theorem}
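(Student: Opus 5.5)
The quickest route is the classical characterization of the Segre cubic: a cubic threefold in \(\PP^4\) with ten ordinary double points (the maximal number for a nodal cubic threefold) is projectively equivalent to \(\mathfrak S_3\). Combined with Proposition~\ref{prop:ten-nodes}, this would finish the first claim. Since that characterization is a cited result rather than something proved here, I would also give a direct, self-contained change of coordinates. The coordinates \(h,f_1,\ldots,f_4\) are independent, so \(X\) is the fixed cubic \(\Phi=e_3-he_2+h^2e_1-h^3\) in these coordinates. In particular every "general residual configuration" yields the same cubic up to projective equivalence, and it is enough to identify this one explicit cubic with \(\mathfrak S_3\).

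For the explicit identification I would look for six linear forms \(x_0,\ldots,x_5\) in \(h,f_1,\ldots,f_4\) with \(\sum x_a=0\) and \(\sum x_a^3=c\,\Phi\) for some \(c\neq 0\). The \(S_4\)-symmetry permuting the \(f_i\) suggests the ansatz
\[
x_i=f_i-\alpha h\quad(i=1,\ldots,4),\qquad x_5=\beta h+\gamma e_1,\qquad x_0=-(x_1+\cdots+x_5).
\]
I would then match \(\sum x_a^3\) against \(\Phi\), using Newton's identities to write power sums in terms of \(e_1,e_2,e_3\). The hard part is the coefficients of \(p_3=\sum f_i^3\) and \(p_1p_2\), which must combine into a multiple of \(e_3\).

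A cleaner guess comes from the nodes. In \(\mathfrak S_3\) the nodes are the \(S_6\)-orbit of \((1,1,1,-1,-1,-1)\), and the planes are \(x_a+x_b=x_c+x_d=x_e+x_f=0\). The planes \(M_{ij|kl}\) and \(\Pi_{ij}\) have this form when one sets
\[
x_i=f_i-\tfrac12 h\quad(i=1,\ldots,4),\qquad x_5=\tfrac12 h,\qquad x_0=\tfrac12 h-\Big(e_1-2h+\tfrac12 h\Big)\ \text{(forced by }\textstyle\sum x_a=0).
\]
For example, \(f_1+f_2=h\) becomes \(x_1+x_2=0\). I would fix the precise form of \(x_0\) by requiring \(\Pi_{ij}=\{f_i=0,\ f_j=h\}\) to become \(x_i+x_5=0,\ x_j+x_0=0\) or a similar pair condition, and then check that \(\sum x_a^3\) is proportional to \(\Phi\). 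This check is the main computational obstacle. If the naive choice fails, I would solve for the coefficients of \(x_0\) and \(x_5\) by imposing that the ten nodes \(p_I,q_i\) map to the ten points of type \((1,1,1,-1,-1,-1)\). A projectivity sending the nodes of \(X\) onto the nodes of \(\mathfrak S_3\) carries \(X\) to a cubic singular at those ten points, and such a cubic is unique since it is determined by the linear system of cubics singular there. That gives the identification without expanding \(\sum x_a^3\) fully.

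For the last claim: \(\mathfrak S_3\) contains exactly fifteen planes. Proposition~\ref{prop:extra-planes} supplies fifteen planes on \(X\), and they are pairwise distinct, which can be checked from their defining equations in the coordinates \(h,f_i\). So under the isomorphism they are exactly the fifteen planes of the Segre cubic.
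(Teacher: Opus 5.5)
Your first route is exactly the paper's proof. Proposition~\ref{prop:ten-nodes} gives ten ordinary double points, the cited characterization identifies \(X\) with \(\mathfrak S_3\), and the fifteen pairwise distinct planes of Proposition~\ref{prop:extra-planes} then exhaust the fifteen planes of \(\mathfrak S_3\). Your remark is also correct: once \(h,f_1,\ldots,f_4\) are independent, \(X\) is always the same cubic \(\Phi\) in these coordinates. That is why a single explicit identification would suffice.

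The supplementary explicit route, however, fails as written.
\begin{itemize}
\item Both of your guesses keep \(x_i=f_i-\alpha h\) for \(i=1,\ldots,4\). No choice of \(\alpha\), \(x_0\), \(x_5\) can work. A projective equivalence must send the node \(q_i\) of \(X\) to a node of \(\mathfrak S_3\), and all coordinates of such a node are \(\pm1\). But \(x_m(q_i)=0\) for \(m\neq i\).
\item Concretely, your substitution sends the point \(h=f_3=f_4=0\), \(f_1=f_2=1\) of \(X\) to \((-2,1,1,0,0,0)\), where \(\sum x_a^3=-6\neq 0\).
\item Your displayed \(x_0\) equals \(2h-e_1\), which does not even give \(\sum x_a=0\).
\item For the same reason, the fallback of solving only for \(x_0,x_5\) cannot succeed. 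Also, the uniqueness of a cubic singular at the ten nodes is asserted, not proved.
\end{itemize}

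The fix is to let \(e_1\) enter the first four coordinates:
\[
x_0=e_1-h,\qquad x_5=e_1-3h,\qquad x_m=2f_m+h-e_1\quad(m=1,\ldots,4).
\]
This is an isomorphism onto \(\{\sum x_a=0\}\). Write \(u_m=f_m-\tfrac12 h\), so that \(\Phi=e_3(u)+\tfrac14h^2e_1(u)\). Newton's identities then give \(\sum x_a^3=24\,\Phi\).

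This map sends:
\begin{itemize}
\item \(M_{ij|kl}\) to the plane \((05\mid ij\mid kl)\);
\item \(\Pi_{ij}\) to \((0i\mid 5j\mid kl)\), where \(\{k,l\}=\{1,2,3,4\}\setminus\{i,j\}\);
\item the nodes \(p_I,q_i\) to the ten points of type \((1,1,1,-1,-1,-1)\).
\end{itemize}
With it, the identification of \(X\) with \(\mathfrak S_3\), and of its fifteen planes with the planes \(\Pi_{ab|cd|ef}\), becomes completely explicit. You no longer need the ten-node characterization or the case analysis of Proposition~\ref{prop:ten-nodes}.
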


\begin{proof}
The Segre cubic primal is the unique cubic threefold with ten ordinary double points, up to
projective equivalence, see Dolgachev~\cite{DolgachevSegre}.  By
Proposition~\ref{prop:ten-nodes}, the residual cubic \(X\) has ten ordinary double points
in the general case.  Hence \(X\) is projectively equivalent to the Segre cubic.  Since the
Segre cubic contains exactly fifteen planes, the fifteen distinct planes constructed above
are the full set of planes on \(X\).
\end{proof}

\subsection{The Segre cubic and the point-plane Cremona--Richmond configuration}

In the standard model
\[
\mathfrak S_3=
\left\{
x_0+\cdots+x_5=0,\quad x_0^3+\cdots+x_5^3=0
\right\}
\subset \PP^5,
\]
the fifteen planes are indexed by partitions of the six symbols into three unordered pairs:
\[
(ab\mid cd\mid ef).
\]
The corresponding plane is
\[
\Pi_{ab|cd|ef}:\quad
x_a+x_b=x_c+x_d=x_e+x_f=0.
\]
There are
\[
\frac{6!}{(2!)^3\,3!}=15
\]
such planes.

Moreover, there are also fifteen distinguished points on \(\mathfrak S_3\), namely
\[
r_{ab}=[e_a-e_b],\qquad 0\leq a<b\leq 5.
\]
In other words, \(r_{ab}\) is the point with four coordinates equal to zero, one coordinate
equal to \(1\), and one coordinate equal to \(-1\).  These points lie on the Segre cubic
because
\[
\sum x_i=0,\qquad \sum x_i^3=1+(-1)=0.
\]
The plane \(\Pi_{ab|cd|ef}\) contains exactly the three points
\[
r_{ab},\qquad r_{cd},\qquad r_{ef}.
\]
Thus the fifteen Segre planes and the fifteen points \(r_{ab}\) form a point-plane
\((15_3,15_3)\)-configuration.  This is often called the Cremona--Richmond configuration
of planes on the Segre cubic; see Manivel~\cite{Manivel}.

\begin{remark}
The terminology of duads, synthemes and totals used below goes back to Sylvester.
In his paper \cite{Sylvester1844}, he introduced the word ``syntheme''
for an aggregate of combinations in which each monad of the given system
occurs exactly once. In the case of six monads, a duad syntheme is precisely
a partition of the six symbols into three unordered pairs, and a total is
a collection of five such synthemes containing each duad exactly once.
\end{remark}

\begin{warning}
The points \(r_{ab}=[e_a-e_b]\) should not be confused with the point-line realization
of the Cremona--Richmond configuration used in the geprofi discussion below.  The points
\(r_{ab}\) lie on the Segre cubic and are incident with the fifteen Segre planes.  The
point-line realization below uses a different set of fifteen points in \(\PP^4\), naturally
identified with duads, whose collinear triples are synthemes.
\end{warning}

\section{The point-line Cremona--Richmond configuration in \(\PP^4\)}

Let
\[
V=\left\{(x_1,\ldots,x_6)\in \CC^6\mid x_1+\cdots+x_6=0\right\}.
\]
We work in
\[
\PP(V)\simeq \PP^4.
\]
For each duad \(\{i,j\}\subset\{1,\ldots,6\}\), define a point
\[
z_{ij}=[v_{ij}]\in \PP(V),
\]
where
\[
v_{ij}=e_i+e_j-\frac13(e_1+\cdots+e_6).
\]
Equivalently, after multiplying by \(3\), \(z_{ij}\) is represented by the vector
\[
(a_1,\ldots,a_6),
\]
where
\[
a_i=a_j=2,\qquad a_k=-1\quad\text{for }k\notin\{i,j\}.
\]
Dropping the sixth coordinate gives the following \(\PP^4\)-coordinates:
\[
\begin{array}{c|l}
z_{12} & [2:2:-1:-1:-1]\\
z_{13} & [2:-1:2:-1:-1]\\
z_{14} & [2:-1:-1:2:-1]\\
z_{15} & [2:-1:-1:-1:2]\\
z_{16} & [2:-1:-1:-1:-1]\\
z_{23} & [-1:2:2:-1:-1]\\
z_{24} & [-1:2:-1:2:-1]\\
z_{25} & [-1:2:-1:-1:2]\\
z_{26} & [-1:2:-1:-1:-1]\\
z_{34} & [-1:-1:2:2:-1]\\
z_{35} & [-1:-1:2:-1:2]\\
z_{36} & [-1:-1:2:-1:-1]\\
z_{45} & [-1:-1:-1:2:2]\\
z_{46} & [-1:-1:-1:2:-1]\\
z_{56} & [-1:-1:-1:-1:2]
\end{array}.
\]
Let us recall that a \textit{syntheme} is a partition of \(\{1,\ldots,6\}\) into three unordered pairs,
\[
(ab\mid cd\mid ef).
\]
To such a syntheme we associate the line
\[
L_{ab|cd|ef}=\Span z_{ab},z_{cd},z_{ef}\Rspan.
\]

\begin{proposition}[The \(15_3\) Cremona--Richmond configuration]
The three points
\[
z_{ab},\qquad z_{cd},\qquad z_{ef}
\]
are collinear if and only if
\[
(ab\mid cd\mid ef)
\]
is a syntheme.  The fifteen points \(z_{ij}\) and the fifteen lines
\(L_{ab|cd|ef}\) form a \((15_3,15_3)\)-configuration, which means that each line contains three
points and each point lies on three lines.
\end{proposition}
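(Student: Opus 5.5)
The proof reduces both directions to linear algebra on the vectors \(v_{ij}=e_i+e_j-\tfrac13\mathbf 1\), where \(\mathbf 1=e_1+\cdots+e_6\).

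\emph{Synthemes give collinear triples.} For a syntheme \((ab\mid cd\mid ef)\) we have \(\{a,b,c,d,e,f\}=\{1,\ldots,6\}\). Hence
\[
v_{ab}+v_{cd}+v_{ef}=\mathbf 1-\mathbf 1=0 .
\]
This is a nontrivial linear relation among the three vectors, so the three points are collinear. They are pairwise distinct, because the vectors \(v_{ij}\) for distinct duads are pairwise non-proportional: their \(2\)-entries sit in different positions. So \(L_{ab|cd|ef}\) is a genuine line through exactly these three of the given points.

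\emph{Collinear triples give synthemes.} Take three distinct duads \(D_1,D_2,D_3\) and suppose \(\alpha_1v_{D_1}+\alpha_2v_{D_2}+\alpha_3v_{D_3}=0\) with the \(\alpha_k\) not all zero. Write this as
\[
\sum_k\alpha_k\chi_{D_k}=\tfrac13\Bigl(\sum_k\alpha_k\Bigr)\mathbf 1,
\]
where \(\chi_D\) is the indicator vector of \(D\).

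If \(\sum\alpha_k=0\), then \(\sum\alpha_k\chi_{D_k}=0\). This is impossible: any two of the \(\chi_D\) are independent, and three distinct indicator vectors of \(2\)-sets cannot satisfy a nontrivial relation. Indeed, some \(D_k\) contains an element lying in no other \(D_l\), which forces \(\alpha_k=0\), and the remaining two are independent.

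So \(\sum\alpha_k\neq0\), and \(\sum\alpha_k\chi_{D_k}\) is a nonzero constant vector. Its support is all of \(\{1,\ldots,6\}\), so \(D_1\cup D_2\cup D_3=\{1,\ldots,6\}\). Three \(2\)-sets covering six elements must be pairwise disjoint, so the triple is a syntheme.

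\emph{Counting.} Every triple of points of the form \(z_{D_1},z_{D_2},z_{D_3}\) lying on a common line gives a linear dependence, so by the above the collinear triples are exactly the \(15\) synthemes. Next we check that no line \(L_{ab|cd|ef}\) contains a fourth point \(z_{gh}\). If it did, then \(z_{gh}\) together with two of \(z_{ab},z_{cd},z_{ef}\) would be collinear, and hence would form a syntheme. But \(gh\) is disjoint from two of the pairs \(ab,cd,ef\) only if it equals the third pair. So each line carries exactly three points. Distinct synthemes give distinct point sets, hence distinct lines.

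Each duad \(\{i,j\}\) lies in exactly three synthemes, one for each of the three perfect matchings of the remaining four symbols. So each point lies on exactly three of the lines, and we obtain a \((15_3,15_3)\)-configuration.

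The only real obstacle is excluding spurious collinearities. This is handled cleanly by the indicator-vector argument, in particular the case \(\sum\alpha_k=0\); no coordinate computation is needed.
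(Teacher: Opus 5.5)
Your approach is the paper's: reduce collinearity to a relation \(\sum_k\alpha_k\chi_{D_k}=\tfrac13(\sum_k\alpha_k)\mathbf 1\), then conclude that the three duads must cover all six symbols. You split on whether \(\sum\alpha_k\) vanishes, while the paper splits on whether some symbol is uncovered; this is the same argument organised differently. Your \(\sum\alpha_k\neq0\) branch is correct. So are the exclusion of a fourth point on a line and the incidence count, and the last two make explicit what the paper leaves implicit.

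There is, however, a false step in the \(\sum\alpha_k=0\) branch. You justify the independence of three distinct indicator vectors of \(2\)-sets by saying that some \(D_k\) has an element lying in no other \(D_l\). This fails for a triangle \(D_1=\{a,b\}\), \(D_2=\{b,c\}\), \(D_3=\{a,c\}\), where every element of every duad is shared. The case is not vacuous for you. A triangle leaves three symbols uncovered, and those coordinates force \(\sum\alpha_k=0\), so any relation among \(v_{ab},v_{bc},v_{ac}\) lands exactly in the branch where your argument breaks.

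The conclusion is still true, but it needs a separate computation, which is precisely the extra case in the paper's proof. The coordinates \(a,b,c\) give \(\alpha_1+\alpha_3=\alpha_1+\alpha_2=\alpha_2+\alpha_3=0\), hence \(2\alpha_1=0\) and all \(\alpha_k=0\). This uses \(2\neq0\): in characteristic \(2\) the three indicator vectors of a triangle sum to zero. So no purely combinatorial ``private element'' argument can handle this case. Once you add it, the rest of your proof goes through.
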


\begin{proof}
If \((ab\mid cd\mid ef)\) is a syntheme, then
\[
v_{ab}+v_{cd}+v_{ef}=0.
\]
Hence the three points \(z_{ab},z_{cd},z_{ef}\) are collinear.

Conversely, suppose that three distinct points
\[
z_{ab},\quad z_{cd},\quad z_{ef}
\]
are collinear. Then the corresponding vectors are linearly dependent in
\(V\). Thus there exist scalars \(\alpha,\beta,\gamma\), not all zero,
such that
\[
\alpha v_{ab}+\beta v_{cd}+\gamma v_{ef}=0.
\]
Write
\[
A=\{a,b\},\qquad B=\{c,d\},\qquad C=\{e,f\}.
\]
Using
\[
v_A=\sum_{i\in A}e_i-\frac13(e_1+\cdots+e_6),
\]
the \(r\)-th coordinate of the above relation gives
\[
\alpha\mathbf 1_{r\in A}
+\beta\mathbf 1_{r\in B}
+\gamma\mathbf 1_{r\in C}
=
\frac{\alpha+\beta+\gamma}{3}
\qquad
\text{for every }r=1,\ldots,6.
\]
In other words, the weighted incidence degree of each symbol is constant.

If the three duads \(A,B,C\) do not form a partition of
\(\{1,\ldots,6\}\), then some symbol does not occur in any of them. Hence
the common value above is zero. Therefore the weighted incidence degree
of every symbol is zero. Looking at the graph on \(\{1,\ldots,6\}\) with
edges \(A,B,C\), this means that every vertex has weighted degree zero.
This forces all edge weights \(\alpha,\beta,\gamma\) to be zero: this is
immediate for an edge with a vertex of degree one, while in the only
remaining case the three edges form a triangle, and the equations
\[
\alpha+\beta=0,\qquad
\alpha+\gamma=0,\qquad
\beta+\gamma=0
\]
again give \(\alpha=\beta=\gamma=0\).
This contradicts the choice of \(\alpha,\beta,\gamma\).

Thus \(A,B,C\) must cover all six symbols. Since they are three duads, this
means that they are pairwise disjoint. Hence
\[
(ab\mid cd\mid ef)
\]
is a syntheme. Therefore the only collinear triples are precisely the
triples corresponding to synthemes.
\end{proof}
\begin{corollary}
The fifteen collinear triples are:
\[
\begin{array}{lll}
(z_{12},z_{34},z_{56}),&
(z_{12},z_{35},z_{46}),&
(z_{12},z_{36},z_{45}),\\
(z_{13},z_{24},z_{56}),&
(z_{13},z_{25},z_{46}),&
(z_{13},z_{26},z_{45}),\\
(z_{14},z_{23},z_{56}),&
(z_{14},z_{25},z_{36}),&
(z_{14},z_{26},z_{35}),\\
(z_{15},z_{23},z_{46}),&
(z_{15},z_{24},z_{36}),&
(z_{15},z_{26},z_{34}),\\
(z_{16},z_{23},z_{45}),&
(z_{16},z_{24},z_{35}),&
(z_{16},z_{25},z_{34}).
\end{array}
\]
\end{corollary}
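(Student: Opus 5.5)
The corollary follows directly from the preceding proposition: the collinear triples among the $z_{ij}$ are exactly those indexed by synthemes. So the proof reduces to listing the fifteen synthemes of $\{1,\ldots,6\}$ and checking that each appears exactly once in the table.

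To generate the synthemes systematically, I will sort by the duad containing the symbol $1$. There are five choices, $\{1,k\}$ with $k\in\{2,\ldots,6\}$. For each, the remaining four symbols split into two duads in exactly three ways, giving $5\cdot 3=15$ synthemes. This matches the count $6!/((2!)^3\,3!)=15$ recorded earlier.

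I will then read the table row by row. The first row covers the synthemes containing $12$; the remaining four symbols $\{3,4,5,6\}$ give $34|56$, $35|46$, $36|45$. The rows for $13$, $14$, $15$, $16$ arise in the same way, from the complements $\{2,4,5,6\}$, $\{2,3,5,6\}$, $\{2,3,4,6\}$ and $\{2,3,4,5\}$. In each case I will confirm that the three listed triples are precisely the three pairings of the complement.

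As a consistency check, each duad should occur in exactly three triples, in line with the $(15_3,15_3)$ statement. For instance, $z_{56}$ occurs with $12|34$, $13|24$ and $14|23$. There is no real obstacle here; the only care needed is bookkeeping, namely that no syntheme is repeated or omitted.
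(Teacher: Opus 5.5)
Your proposal is correct and matches the paper. There the corollary is an immediate consequence of the preceding proposition (three distinct points $z_{ab},z_{cd},z_{ef}$ are collinear exactly when $(ab\mid cd\mid ef)$ is a syntheme), and the table is just the fifteen synthemes grouped by the duad containing $1$, which is exactly your enumeration; your row-by-row check of the complements and the spot check on $z_{56}$ are accurate.
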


\subsection{Five lines covering the fifteen points}

A \emph{total} is a set of five synthemes containing every duad exactly once.  Equivalently, it is a
decomposition of the fifteen points into five collinear triples.

One such total is
\[
\begin{aligned}
T_1&=(12\mid 34\mid 56),\\
T_2&=(13\mid 25\mid 46),\\
T_3&=(14\mid 26\mid 35),\\
T_4&=(15\mid 24\mid 36),\\
T_5&=(16\mid 23\mid 45).
\end{aligned}
\]
Thus the five lines
\[
\begin{aligned}
L_1&=\Span z_{12},z_{34},z_{56}\Rspan,\\
L_2&=\Span z_{13},z_{25},z_{46}\Rspan,\\
L_3&=\Span z_{14},z_{26},z_{35}\Rspan,\\
L_4&=\Span z_{15},z_{24},z_{36}\Rspan,\\
L_5&=\Span z_{16},z_{23},z_{45}\Rspan
\end{aligned}
\]
cover all fifteen points.  Put
\[
C_T=L_1\cup L_2\cup L_3\cup L_4\cup L_5.
\]
Then \(C_T\subset\PP^4\) is a reducible curve of degree \(5\), and
\[
Z_{\CR}:=\{z_{ij}\mid 1\leq i<j\leq 6\}\subset C_T.
\]
For completeness, we count all such coverings.
\begin{proposition}
There are exactly six coverings of the point-line Cremona--Richmond set
$Z_{CR}$ by five trisecant lines.
\end{proposition}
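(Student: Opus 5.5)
We need to count covers of the fifteen points by five trisecant lines; these should be exactly the totals, of which there are six.

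The first step is to reduce to synthemes. A trisecant line of \(Z_{\CR}\) is a line containing at least three of the points \(z_{ij}\). By the preceding Proposition, three of the \(z_{ij}\) are collinear if and only if their duads form a syntheme. So every trisecant line is one of the fifteen lines \(L_{ab|cd|ef}\). Moreover, no such line contains a fourth point: any three points on it would also be collinear and hence form a syntheme. Three duads from four would then be pairwise disjoint and cover all six symbols, leaving no room for the fourth duad. Hence each trisecant line contains exactly three points of \(Z_{\CR}\).

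Next, five such lines cover the fifteen points only if no point is counted twice, since \(5\cdot 3=15\). So a covering is exactly a set of five synthemes in which every duad occurs exactly once, i.e.\ a total. The problem therefore reduces to the classical fact that there are exactly six totals.

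To count totals directly, fix the syntheme through the duad \(12\); there are three choices, \((12|34|56)\), \((12|35|46)\) and \((12|36|45)\), and I take \(T_1=(12|34|56)\). The remaining four synthemes must contain the duads \(13,14,15,16\), one each. The syntheme containing \(13\) must avoid the duads \(34\) and \(56\), so it is \((13|25|46)\) or \((13|26|45)\). I will check by a short case analysis that each choice extends to a total in exactly one way. For example, \((13|25|46)\) forces \((14|26|35)\), \((15|24|36)\) and \((16|23|45)\), which is the total in the text. This gives \(2\) totals containing \(T_1\). By the \(S_6\)-symmetry, each of the fifteen synthemes lies in the same number \(t\) of totals, and we have just found \(t=2\). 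Double counting pairs (total, syntheme in it) gives \(5\cdot\#\text{totals}=15\cdot 2\), so there are \(6\) totals.

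The main obstacle is the bookkeeping in the case analysis showing that each choice of syntheme through \(13\) extends uniquely. It is finite and small, and I would present it explicitly, using the list of fifteen collinear triples in the Corollary.
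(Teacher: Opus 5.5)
Your proof is correct and follows the paper's argument. You reduce coverings to totals, fix $(12\mid 34\mid 56)$, note that the syntheme through $13$ is $(13\mid 25\mid 46)$ or $(13\mid 26\mid 45)$ with the rest forced, and double count $5N=15\cdot 2$. Your extra check spells out a step the paper only asserts: every trisecant line is a syntheme line with exactly three points, so $5\cdot 3=15$ forces the five lines to be disjoint.
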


\begin{proof}
In the duad--syntheme model, the fifteen points of $Z_{CR}$ are indexed
by the fifteen duads of $\{1,\ldots,6\}$, and the trisecant lines are
indexed by synthemes, that is, by partitions
\[
(ab\mid cd\mid ef)
\]
of $\{1,\ldots,6\}$ into three unordered pairs. Thus a covering of
$Z_{CR}$ by five trisecant lines is the same thing as a partition of the
fifteen duads into five synthemes. Equivalently, it is a one-factorization
of the complete graph $K_6$.

We count such factorizations. Fix the syntheme
\[
(12\mid 34\mid 56).
\]
A total containing it must cover the remaining twelve duads by four
synthemes. Consider the duad $13$. Since $56$ has already been used, the
syntheme containing $13$ is either
\[
(13\mid 25\mid 46)
\quad \mbox{ or }\quad (13\mid 26\mid 45).
\]
In each of the two cases the remaining synthemes are forced. Hence every
syntheme belongs to exactly two totals.

Since there are fifteen synthemes and each total contains five of them,
the number $N$ of totals satisfies
\[
5N=15\cdot 2.
\]
Therefore $N=6$.
\end{proof}
\subsection{The \((5,3)\)-geprofi property}

We now show a somewhat surprising connection between the above considerations and the concept of geprofi sets introduced by Chiantini, Farnik, Favacchio, Harbourne, Migliore,
Szemberg and Szpond~\cite{Geprofi}. Let us recall first the relevant definition. Note that the curves and surfaces appearing in the definition may be reducible.

\begin{definition}
A finite set \(Z\subset\PP^4\) of \(bd\) points is called a \((b,d)\)-geprofi set if, for
a general projection
\[
\pi_P:\PP^4\dashrightarrow\PP^3
\]
from a point \(P\in\PP^4\), the image \(\pi_P(Z)\subset\PP^3\) is a full intersection
of a curve of degree \(b\) and a surface of degree \(d\).
\end{definition}

\begin{theorem}
\label{thm:CR-geprofi}
The point-line Cremona--Richmond set
\[
Z_{\CR}=\{z_{ij}\mid 1\leq i<j\leq 6\}\subset\PP^4
\]
is a \((5,3)\)-geprofi set.
\end{theorem}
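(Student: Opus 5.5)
The curve of degree \(5\) will be the projection of \(C_T\), and the main work is producing a cubic hypersurface in \(\PP^4\) that cuts \(C_T\) exactly in \(Z_{\CR}\).

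For a general point \(P\in\PP^4\), the projection \(\pi_P\) maps the five lines \(L_1,\ldots,L_5\) isomorphically onto five lines in \(\PP^3\). Their union \(\pi_P(C_T)\) is a reduced curve of degree \(5\) containing \(\pi_P(Z_{\CR})\). The natural surface is the cone over a cubic: if \(S\subset\PP^4\) is a cubic threefold that is a cone with vertex \(P\), then \(\pi_P(S)\) is a cubic surface. So I would look instead for a cubic threefold \(Y\subset\PP^4\) with \(Y\cap C_T=Z_{\CR}\) scheme-theoretically. By Bezout, each \(L_k\) meets \(Y\) in \(3\) points counted with multiplicity, and these must be exactly its three points of \(Z_{\CR}\), transversally.

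The difficulty is that projecting from a general \(P\) does not map \(Y\) to a surface. I would therefore argue directly in \(\PP^3\) by dimension count and interpolation. The \(15\) points \(\pi_P(Z_{\CR})\) impose at most \(15\) conditions on the \(20\)-dimensional space of cubics in \(\PP^3\). Hence there is a \(\geq 5\)-dimensional family of cubic surfaces through them. I want one, \(S\), that contains no line \(\pi_P(L_k)\). Then \(S\cap\pi_P(L_k)\) consists of exactly the three projected points, each simple, and so \(S\cap\pi_P(C_T)=\pi_P(Z_{\CR})\) as a scheme of length \(15=5\cdot 3\). This is precisely the full intersection in the definition. (Because the \(\pi_P(L_k)\) are pairwise disjoint for general \(P\) — the \(L_k\) are pairwise disjoint, being spanned by disjoint triples in a total, so lines spanning a \(\PP^3\) project to skew lines — no point lies on two components.)

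The key step, and the main obstacle, is showing that not every cubic through \(\pi_P(Z_{\CR})\) contains some \(\pi_P(L_k)\). A cubic through the \(3\) points of a line either contains the line or meets it only there. Containing \(\pi_P(L_k)\) adds one condition, since the cubic already vanishes at three points and a fourth point forces containment. So the cubics containing a given line form a hyperplane in the linear system, provided that hyperplane is proper. To see it is proper, I would exhibit an explicit cubic through the points and off one line. A concrete choice is the union of three planes: take planes \(H_1\supset \pi_P(L_1)\cup\{\text{points of }L_2\}\)? That fails in general, since a plane contains a line and at most one more point in general position. Instead I would use products of linear forms adapted to the syntheme structure, or simply argue via the \(S_6\)-symmetry and the exact vanishing \(v_{ab}+v_{cd}+v_{ef}=0\). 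The \(15\) points span \(V\) with many relations, so they impose fewer than \(15\) conditions on cubics on \(\PP^4\).

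If the direct construction resists, the fallback is a rank computation: the conditions imposed on the linear system by \(Z_{\CR}\) together with a fourth point on a chosen \(L_k\) have strictly larger rank. This can be checked once on the explicit coordinates in the table, for instance with \(P\) replaced by a specific point. Semicontinuity then gives it for general \(P\). Finally, a union of five proper hyperplanes in an irreducible linear system is proper, so a general member \(S\) contains none of the five lines, which completes the argument.
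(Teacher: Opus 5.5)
\textbf{Verdict.} Your proposal takes the same route as the paper, but its decisive step is left unproved, and the general-position intuition behind it does not apply here.

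\textbf{What matches the paper.} You project $C_T$, use $\dim I_\Gamma(3)\ge 5$, show that a general cubic through $\Gamma$ contains none of the lines $\overline L_k$, and conclude by B\'ezout on each line. Those last steps, and the union-of-proper-subspaces argument, are fine.

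\textbf{The gap.} It is the step you yourself single out: it is never established. The explicit construction is abandoned, and the fallback is only announced. The paper's proof has the same weak spot; it simply asserts that for general $P$ the extra condition ``is not automatic''. That assertion is not a formality, because the five lines of a total are special. For $T_k=(ab\mid cd\mid ef)$ as listed, put $A=e_a+e_b$, $B=e_c+e_d$, $C=e_e+e_f$ and $\ell_k=(A-B)\wedge(B-C)\in\wedge^2V$. A direct coordinate check gives $\ell_1+\ell_2+\ell_3-\ell_4-\ell_5=0$, so every plane meeting four of the $L_k$ meets the fifth. Consequently, for general $P$:
\begin{itemize}
\item the five projected lines have two common transversals $t_1,t_2$;
\item they lie on a pencil of cubic surfaces;
\item the restriction map $H^0(\cO_{\PP^3}(3))\to\bigoplus_k H^0(\cO_{\overline L_k}(3))$ is therefore not an isomorphism.
\end{itemize}
In explicit examples I worked through by hand, one finds $\dim I_\Gamma(3)=6$, i.e.\ $\Gamma$ imposes only $14$ conditions on cubics. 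Separately, ``spanned by disjoint triples'' does not by itself make the $L_k$ disjoint. The correct reason is that two synthemes of a total form a hexagon, so their six vectors satisfy only the two syntheme relations and $\langle L_i,L_j\rangle\cong\PP^3$.

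\textbf{Why your fallback does not close it.} A check that the sixteen points $\Gamma\cup\{w\}$, with $w\in\overline L_k$, impose independent conditions at one $P_0$ would transfer to general $P$. But that check fails in the examples, because $\Gamma$ itself is rank-deficient. A mere rank increase at one $P_0$ does not transfer. Ranks are only lower semicontinuous, so you would also need $\Gamma_{P_0}$ to have the generic rank, for instance by proving that $\Gamma_P$ imposes at most $14$ conditions for every $P$. That requires its own argument.

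\textbf{A way to make the key step precise.} Use the transversals.
\begin{itemize}
\item Set $x_k=t_1\cap\overline L_k$ and $y_k=t_2\cap\overline L_k$.
\item Let $s_k$ be the cubic form on $\overline L_k$ vanishing on $\Gamma\cap\overline L_k$.
\item Let $\mu,\mu'$ be the unique linear relations $\sum_k\mu_kF(x_k)=0=\sum_k\mu'_kF(y_k)$ satisfied by all cubic forms $F$ (five points on a line).
\end{itemize}
For general $P$, $I_\Gamma(3)$ modulo the pencil is
\[
\Bigl\{c\in\CC^5:\ \sum_k\mu_ks_k(x_k)c_k=\sum_k\mu'_ks_k(y_k)c_k=0\Bigr\},
\]
where $c_k$ is the coefficient of $s_k$ in the restriction to $\overline L_k$. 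So a general member avoids $\overline L_k$ exactly when the $k$-th coordinate functional is not in the span of these two coefficient vectors. In the examples the two vectors are proportional with all entries nonzero, so the claim does hold there. Proving this, or otherwise excluding the coordinate functionals from the span, for general $P$ is the missing argument, both in your proposal and in the paper.
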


\begin{proof}
Choose the total \(T\) from the previous section, and let
\[
C_T=L_1\cup\cdots\cup L_5.
\]
This is a degree \(5\) curve containing \(Z_{\CR}\).

We choose \(P\) outside the finite union of the secant varieties of the lines \(L_i\) and
outside the spans \(\langle L_i,L_j\rangle\), \(i\neq j\). Then the projected lines
\(\overline L_1,\ldots,\overline L_5\) are pairwise skew, and the projection is an isomorphism
on each \(L_i\).

\noindent
Let
\[
\pi_P:\PP^4\dashrightarrow\PP^3
\]
be the projection from $P$, and put
\[
\Gamma=\pi_P(Z_{\CR}),\qquad \overline C_T=\pi_P(C_T).
\]
By the choice of \(P\), the projection is defined and injective on \(Z_{\CR}\). Since it maps each \(L_i\) isomorphically onto a line \(\overline L_i\subset\PP^3\), the union of these lines
\[
\overline C_T=\overline L_1\cup\cdots\cup \overline L_5
\]
is a degree \(5\) curve in \(\PP^3\), and each \(\overline L_i\) contains exactly three
points of \(\Gamma\).

Since
\[
h^0(\cO_{\PP^3}(3))=20
\]
and \(|\Gamma|=15\), the vector space \(I_\Gamma(3)\) of cubic surfaces through \(\Gamma\)
has dimension at least \(5\).  For a fixed line \(\overline L_i\), the condition that a
cubic surface containing the three points \(\Gamma\cap \overline L_i\) should contain the
whole line \(\overline L_i\) is one additional linear condition.  For a general projection
this additional condition is not automatic.  Hence a general cubic
\[
S\in |I_\Gamma(3)|
\]
contains none of the five lines \(\overline L_i\).

For such a cubic surface \(S\), the restriction \(S|_{\overline L_i}\) is a cubic divisor
on \(\overline L_i\simeq\PP^1\).  It contains the three points
\[
\Gamma\cap \overline L_i,
\]
and, since \(S\) does not contain \(\overline L_i\), it has no other intersection with
\(\overline L_i\).  Therefore
\[
S\cap \overline L_i=\Gamma\cap \overline L_i
\]
scheme-theoretically for a general choice of \(S\).  Summing over the five components, we get
\[
S\cap \overline C_T=\Gamma.
\]
Thus \(\Gamma\) is the full intersection of the degree \(5\) curve \(\overline C_T\) and
the cubic surface \(S\).  Hence \(Z_{\CR}\) is \((5,3)\)-geprofi.
\end{proof}
In the context of full and complete intersections, it is worth to mention the following.
\begin{remark}
The fifteen Segre planes are a complete intersection. In the standard model
\[
\mathbb P(V)=\left\{\sum_{i=1}^6 x_i=0\right\}\simeq\PP^4,
\]
their union is cut out by the cubic and quintic equations
\[
\sum_{i=1}^6 x_i^3=0,\qquad
\sum_{i=1}^6 x_i^5=0.
\]
Indeed, on each plane
\[
\Pi_{ab\mid cd\mid ef}:\quad
x_a+x_b=x_c+x_d=x_e+x_f=0,
\]
all odd power sums vanish. Conversely, if \(p_1=p_3=p_5=0\), then Newton
identities give \(e_1=e_3=e_5=0\). Hence
\[
\prod_{i=1}^6(t-x_i)=t^6+e_2t^4+e_4t^2+e_6
\]
is an even polynomial, so the coordinates \(x_1,\ldots,x_6\) can be paired
as opposites. Thus the point lies on one of the fifteen Segre planes.
Consequently, the union of the fifteen planes is a set-theoretic complete intersection
\[
\left(\sum x_i^3=0\right)\cap \left(\sum x_i^5=0\right)
\subset \PP(V).
\]
Since the complete intersection has degree \(3\cdot 5=15\), and the reduced union of the
fifteen planes also has degree \(15\), the above set-theoretic equality implies the claimed
scheme-theoretic complete intersection; equivalently, one checks that the two equations
are generically transverse along each plane.
\end{remark}

\section*{Acknowledgments}
We would also like to thank \textit{LOT Polish Airlines} for being occasionally delayed, thereby granting us a little extra time to continue our mathematical conversations.

\bigskip
\noindent
Piotr Pokora, Tomasz Szemberg\\
Department of Mathematics,\\
University of the National Education Commission Krakow,\\
Podchor\c a\.zych 2,
PL-30-084 Krak\'ow, Poland. \\
\nopagebreak
\textit{E-mail address:} \\
\texttt{piotr.pokora@uken.krakow.pl}\\
\texttt{tomasz.szemberg@gmail.com}\\

\end{document}